\documentclass[a4paper,10pt,reqno]{amsart}
\usepackage[english]{babel}
\usepackage{amsmath,amssymb,amsthm,amscd}
\usepackage{xcolor}
\usepackage{comment}
\usepackage{hyperref}
	\hypersetup{breaklinks=true,colorlinks=true,linkcolor=blue,citecolor=blue}

\usepackage{graphicx}

\parskip=1ex
\textwidth=16.5cm
\hoffset=-1.7cm

\theoremstyle{plain}
\newtheorem{theorem}{Theorem}

\newtheorem{lemma}[theorem]{Lemma}

\theoremstyle{definition}

\newtheorem{remark}[theorem]{Remark}

\newcommand{\N}{\ensuremath{\mathbb{N}}}

\newcommand{\R}{\ensuremath{\mathbb{R}}}

\DeclareMathOperator{\e}{e}

\renewcommand{\leq}{\leqslant}
\renewcommand{\geq}{\geqslant}

\begin{document}
\title[Numerical index $\ell_p^2$]{On the numerical index of the real two-dimensional $L_p$ space}
 \thanks{Research partially supported by projects PGC2018-093794-B-I00 (MCIU/AEI/FEDER, UE), P20-00255 (Junta de Andaluc\'{i}a/FEDER, UE), A-FQM-484-UGR18 (Junta de Andaluc\'{i}a/FEDER, UE), and FQM-185 (Junta de Andaluc\'{i}a/FEDER, UE). The second author is also supported by the Ph.D. scholarship FPU18/03057 (MECD)}
 \subjclass[2010]{46B20,\ 47A12}
 \keywords{numerical radius, numerical index, $L_p$-spaces}
 \date{April 27th, 2022}

\maketitle

\centerline{\textsc{\large Javier
		Mer\'{\i}}  \footnote{Corresponding
author. \emph{E-mail:} \texttt{jmeri@ugr.es}} \quad and \quad \textsc{\large Alicia Quero }}

\begin{center} Departamento de An\'{a}lisis Matem\'{a}tico \\ Facultad de
Ciencias \\ Universidad de Granada \\ 18071 Granada, SPAIN \\
\emph{E-mail addresses:} \texttt{jmeri@ugr.es}, \ \texttt{aliciaquero@ugr.es}
\end{center}

\thispagestyle{empty}

\begin{abstract}
We compute the numerical index of the two-dimensional real $L_p$ space for $\frac65\leq p\leq \frac32$ and $3\leq p\leq 6$.
\end{abstract}

\maketitle

\section[\S 1. Introduction]{Introduction}
The numerical index of a Banach space is a constant relating the
norm and the numerical radius of bounded linear operators on
the space. Let us recall the relevant definitions.
Given a Banach space $X$, we will write $X^*$ for its
topological dual and $\mathcal{L}(X)$ for the Banach algebra of
all bounded linear operators on $X$. For an operator $T\in
\mathcal{L}(X)$, its \emph{numerical radius} is defined as
$$
v(T):=\{|x^*(Tx)| \colon x^*\in X^*,\ x\in X,\ \|x^*\|=\|x\|=x^*(x)=1 \}
$$
which is a seminorm on $\mathcal{L}(X)$
satisfying $v(T)\leq \|T\|$ for every $T\in \mathcal{L}(X)$. The \emph{numerical index} of
$X$ is the constant given by
$$
n(X):=\inf\{v(T) \colon T\in \mathcal{L}(X),\ \|T\|=1\}
$$
or, equivalently, $n(X)$ is the greatest constant $k\geq 0$ satisfying
$k\,\|T\| \leq v(T)$ for every $T\in \mathcal{L}(X)$.
Classical references on numerical index are the paper
\cite{D-Mc-P-W} and the monographs by F.F.~Bonsall and J.~Duncan
\cite{B-D1,B-D2} from the seventies. In the last decades this field of study has grown in various directions with the contribution of several authors. The reader will find the
state of the art on the subject in the survey paper
\cite{KaMaPa} and a more recent account in the first chapter of the book \cite{SpearsBook}.

In the following we recall some results concerning the numerical index which
will be relevant to our discussion. It is clear that $0\leq
n(X)\leq 1$ for every Banach space $X$. In the real case, the numerical index can take any value in $[0,1]$. In the complex case, one has
$1/\e\leq n(X)\leq 1$ and all of these values are possible. Let us
also mention that $v(T^*)=v(T)$ for every
$T\in \mathcal{L}(X)$, where $T^*$ is the adjoint operator of $T$ (see
\cite[\S~9]{B-D1}), so it clearly follows that $n(X^*)\leqslant
n(X)$. Although the equality does not
always hold, when $X$ is a reflexive space, one clearly gets
$n(X)=n(X^*)$. There are some classical Banach spaces for
which the numerical index has been calculated. If $H$ is a Hilbert space of dimension
greater than one, then $n(H)=0$ in the real case and $n(H)=1/2$ in
the complex case. Besides, $n(L_1(\mu))=1$ and the same happens to all 
its isometric preduals. In particular, it follows that $n\bigl(C(K)\bigr)=1$
for every compact $K$ and the same is true for all finite-codimensional subspaces of $C[0,1]$. 

The computation of the numerical index of $L_p$-spaces when $p\neq 1,2,\infty$ has proved to be a difficult task and remains as an important open problem in the theory of numerical index since it started. Let us present the known results on the matter. For $1< p<\infty$, 
we write $\ell_p^m$ for the $m$-dimensional
$L_p$-space, $q=p/(p-1)$
for the conjugate exponent of $p$, and
$$
M_p:=\max_{t\in[0,1]} \frac{|t^{p-1}-t|}{1+t^p}=\max_{t\geq 1} \frac{|t^{p-1}-t|}{1+t^p},
$$
which is the numerical radius of the operator represented by the matrix $\begin{pmatrix}0 & 1 \\ -1 & 0 \end{pmatrix}$
defined on the real space $\ell_p^2$. This is stated in \cite[Lemma~2]{MarMer-LP}, where it is also observed that $M_q=M_p$. It is known that the sequence $\bigl(n(\ell_p^m)\bigr)_{m\in\N}$
	is decreasing and that $n\bigl(L_p(\mu)\bigr)=\inf \{n(\ell_p^m)\colon
		m\in\N\}$ for every measure $\mu$ such that
		$\dim\bigl(L_p(\mu)\bigr)=\infty$, this can be found in \cite{Eddari,Eddari2,Eddari3}. Moreover, in the real case, the inequality $n(L_p[0,1])\geq \frac{M_p}{12}$ holds for every $1< p<\infty$ \cite{MMP-Israel}. Also in the real case, one has  $\max\left\{\frac{1}{2^{1/p}},\
	\frac{1}{2^{1/q}}\right\}\,M_p\leq n(\ell_p^{2})\leq
	M_p$ \cite{MarMer-LP}.

Very recently, it has been proved \cite{MQ} that the numerical index is attained at the operator $\begin{pmatrix}0 & 1 \\ -1 & 0 \end{pmatrix}$ for many absolute and symmetric norms on $\R^2$ and, as a major consequence, it is shown that $n(\ell_p^2)=M_p$ for $\frac{3}{2}\leq p\leq 3$. In \cite{MonikaZheng-lp2} the authors polished skilfully the arguments of \cite{MQ} to provide a slight improvement: the equality $n(\ell_p^2)=M_p$ is proved for $1+\alpha_0\leq p \leq \alpha_1$ where $\alpha_0$ is the root of $f(x)=1+x^{-2}-(x^{-\frac{1}{x}}+x^\frac{1}{x})$ and $\frac{1}{1+\alpha_0}+\frac{1}{\alpha_1}=1$ ($\alpha_0\approx0.4547$). The aim of this paper is to show that $n(\ell_p^2)=M_p$ holds for $p\in [\frac65, 6]$. The main difference with previous works is the use of Riesz-Thorin interpolation theorem (see \cite[Theorem~2.1 in chapter 2]{Stein-Shakarchi}, for instance) to estimate the norm of operators on $\ell_p^2$. More precisely, we will use that the inequality 
$$
\|T\|\leq \|T\|_1^{1/p}\|T\|_\infty^{1/q}
$$
holds for every operator $T\in \mathcal{L}(\ell_p^2)$.

To finish the introduction, we recall two facts about
numerical radius that we will need in our discussion. Let $X$ be a Banach space, and suppose that 
$S\in \mathcal{L}(X)$ is an onto isometry. Then, for
every operator $T\in \mathcal{L}(X)$, it is easy to check that
$$
v(T)=v(\pm S^{-1}TS).
$$
The following result, which can be deduced from \cite[Lemma 3.2]{D-Mc-P-W}, will be useful to compute the numerical radius of operators in $\mathcal{L}(\ell_p^2)$.
\begin{lemma}\label{lemma:numerical-radius}
	Let $1<p<\infty$ and $T=\begin{pmatrix}a & b \\ c & d \end{pmatrix}$ be an operator in $\mathcal{L}(\ell_p^2)$. Then
	$$v(T)=\max\left\{\max_{t\in[0,1]}\dfrac{|a+d\,t^p|+|b\,t+c\,t^{p-1}|}{1+t^p},\max_{t\in[0,1]}\dfrac{|d+a\,t^p|+|c\,t+b\,t^{p-1}|}{1+t^p} \right\}.$$
	In particular, $M_p=v\begin{pmatrix} 0 & 1 \\ -1 & 0 \end{pmatrix}=\displaystyle\max_{t\in[0,1]}\dfrac{\left|t^{p-1}-t\right|}{1+t^p}$.
\end{lemma}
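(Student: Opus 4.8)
The plan is to read $v(T)$ off directly from its definition, using that the norm of $\ell_p^2$ is smooth for $1<p<\infty$. First I would record that smoothness makes the norming functional of each norm-one vector $x=(x_1,x_2)\in\ell_p^2$ unique, namely the duality map $J(x)=\bigl(\mathrm{sign}(x_1)\,|x_1|^{p-1},\ \mathrm{sign}(x_2)\,|x_2|^{p-1}\bigr)\in\ell_q^2$. Hence the pairs $(x^*,x)$ admissible in the definition of $v(T)$ are exactly those of the form $(J(x),x)$ with $\|x\|_p=1$, and --- this is the content extracted from \cite[Lemma~3.2]{D-Mc-P-W} --- in the real case
$$v(T)=\sup\bigl\{\,|J(x)(Tx)|\ \colon\ x\in\ell_p^2,\ \|x\|_p=1\,\bigr\}.$$
Expanding the pairing then gives $J(x)(Tx)=a\,|x_1|^p+d\,|x_2|^p+\mathrm{sign}(x_1x_2)\bigl(b\,|x_1|^{p-1}|x_2|+c\,|x_2|^{p-1}|x_1|\bigr)$.

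Next I would split the unit sphere of $\ell_p^2$ into the two pieces $|x_2|\leq|x_1|$ and $|x_1|\leq|x_2|$. On the first, set $t=|x_2|/|x_1|\in[0,1]$, so that $\|x\|_p=1$ forces $|x_1|^p=\frac{1}{1+t^p}$, $|x_2|^p=\frac{t^p}{1+t^p}$, $|x_1|^{p-1}|x_2|=\frac{t}{1+t^p}$ and $|x_2|^{p-1}|x_1|=\frac{t^{p-1}}{1+t^p}$; writing $\eta=\mathrm{sign}(x_1x_2)$, the formula above becomes
$$J(x)(Tx)=\frac{(a+d\,t^p)+\eta\,(b\,t+c\,t^{p-1})}{1+t^p}.$$
As $t$ runs over $[0,1]$ and, for each $t>0$, $\eta$ attains both values (the signs of $x_1,x_2$ being free; for $t=0$ the $\eta$-term vanishes), the identity $\max_{\eta=\pm1}|A+\eta B|=|A|+|B|$ shows that the supremum over this piece equals $\max_{t\in[0,1]}\frac{|a+d\,t^p|+|b\,t+c\,t^{p-1}|}{1+t^p}$. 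For the second piece I would not repeat the computation: the coordinate-swap $S=\begin{pmatrix}0&1\\1&0\end{pmatrix}$ is an onto isometry of $\ell_p^2$, so $v(T)=v(S^{-1}TS)$ with $S^{-1}TS=\begin{pmatrix}d&c\\b&a\end{pmatrix}$, which amounts to swapping $(a,b)$ with $(d,c)$ in the previous step; the supremum over the second piece is thus $\max_{t\in[0,1]}\frac{|d+a\,t^p|+|c\,t+b\,t^{p-1}|}{1+t^p}$. Taking the larger of the two suprema yields the stated identity, and the final assertion follows immediately by putting $a=d=0$, $b=1$, $c=-1$, which collapses both maxima to $\max_{t\in[0,1]}\frac{|t^{p-1}-t|}{1+t^p}=M_p$.

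Most of this is routine bookkeeping, and the main point to get right is the reduction in the first paragraph, i.e., the passage from the abstract definition of $v(T)$ to a supremum over $\{(J(x),x):\|x\|_p=1\}$: this rests on smoothness of $\ell_p^2$ (uniqueness of norming functionals) together with the real-case description of the numerical radius behind \cite[Lemma~3.2]{D-Mc-P-W}. Once that is in place, the only tools needed are the explicit parametrisation of the sphere and the elementary identity $\max(|A+B|,|A-B|)=|A|+|B|$, so I do not anticipate a genuine obstacle beyond keeping track of the case split.
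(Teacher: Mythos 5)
Your proof is correct. The paper does not actually write out a proof of this lemma --- it only remarks that the statement ``can be deduced from [D-Mc-P-W, Lemma~3.2]'' --- and your argument (smoothness of $\ell_p^2$ giving the unique norming functional $J(x)$, the parametrisation $t=|x_2|/|x_1|$ of half the sphere, the identity $\max_{\eta=\pm1}|A+\eta B|=|A|+|B|$, and the coordinate-swap isometry to handle the other half) is exactly the standard deduction that citation stands in for.
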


\section{The results}
Our first result gives some information about the point $t_0$ where the operator $\begin{pmatrix}0 & 1 \\ -1 & 0 \end{pmatrix}\in\mathcal{L}(\ell_p^2)$ attains its numerical radius which will be of help in the proof of the main theorem.
\begin{lemma}\label{lemma:estimation-t0}
Let $t_0\in]0,1[$ be such that $M_p=\displaystyle\max_{t\in[0,1]}\dfrac{|t^{p-1}-t|}{1+t^p}=\dfrac{|t_0^{p-1}-t_0|}{1+t_0^p}$. 
The inequalities 
$$ 
\left(\dfrac{2p-2}{4-p}\right)^\frac{1}{2-p}\leq t_0\leq \left(\frac{p-1}{2p+1}\right)^\frac{1}{p} \qquad \textnormal{and}\qquad  t_0^{2p-3}\leq\frac{q}{p}\qquad 
$$
hold for every  $p\in [\frac65,\frac32]$.
\end{lemma}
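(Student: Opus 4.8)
The plan is to pin down $t_0$ by a careful study of $g(t):=\dfrac{t^{p-1}-t}{1+t^p}$ on $[0,1]$, keeping in mind that throughout the range $\frac65\leq p\leq\frac32$ we have $1<p<2$. Since $p-1<1$, one has $t^{p-1}\geq t$ on $[0,1]$, so $g\geq 0$ with $g(0)=g(1)=0$ and $g>0$ on $(0,1)$; hence $M_p=g(t_0)$ is attained at an interior critical point. A direct computation shows that the numerator of $g'(t)$ equals $t^{p-2}\varphi(t)$, where $\varphi(t):=(p-1)(1+t^2)-t^p-t^{2-p}$, so on $(0,1)$ the sign of $g'$ is the sign of $\varphi$, and the stationarity condition reads $(p-1)(1+t_0^2)=t_0^p+t_0^{2-p}$.

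First I would prove that $\varphi$ is strictly decreasing on $(0,1)$. Multiplying by $t^{p-1}>0$, $t^{p-1}\varphi'(t)=2(p-1)t^p-p\,t^{2p-2}-(2-p)=:h(t)$, and $h(0^+)=-(2-p)<0$ while $h'(t)=2p(p-1)t^{p-1}\bigl(1-t^{p-2}\bigr)<0$ on $(0,1)$ because $p-2<0$; hence $h<0$ and $\varphi'<0$ there. Since $\varphi(0^+)=p-1>0$ and $\varphi(1)=2(p-2)<0$, $\varphi$ has a unique zero $t_0\in(0,1)$, $g$ increases on $(0,t_0)$ and decreases on $(t_0,1)$, and $t_0$ is the unique maximizer. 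Writing $a:=\left(\frac{2p-2}{4-p}\right)^{1/(2-p)}$ and $b:=\left(\frac{p-1}{2p+1}\right)^{1/p}$ (both lie in $(0,1)$ when $1<p<2$), monotonicity of $\varphi$ gives the equivalences $t_0\geq a\iff\varphi(a)\geq 0$ and $t_0\leq b\iff\varphi(b)\leq 0$.

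Next I would evaluate $\varphi$ at $a$ and $b$. Using $a^{2-p}=\frac{2p-2}{4-p}$ to rewrite $a^p$ as a multiple of $(a^2)^{p-1}$ and simplifying, one gets $\varphi(a)=\dfrac{(p-1)(2-p)}{4-p}\left(1-\dfrac{2p+1}{p-1}\,a^p\right)$, so $\varphi(a)\geq 0\iff a^p\leq\frac{p-1}{2p+1}=b^p\iff a\leq b$. Similarly, using $b^p=\frac{p-1}{2p+1}$ one obtains $\varphi(b)=p\left(\dfrac{2p-2}{2p+1}-\dfrac{4-p}{p-1}\,b^2\right)$, and since $\frac{2(p-1)^2}{(2p+1)(4-p)}=b^p a^{2-p}$, this yields $\varphi(b)\leq 0\iff b^{2-p}\geq a^{2-p}\iff a\leq b$. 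Thus \emph{both} the bound $t_0\geq a$ and the bound $t_0\leq b$ are equivalent to the single scalar inequality $a\leq b$, that is,
\[
\left(\frac{2p-2}{4-p}\right)^{p}\leq\left(\frac{p-1}{2p+1}\right)^{2-p},
\]
to be checked for $p\in[\tfrac65,\tfrac32]$. For the second displayed inequality, note that $2p-3\leq 0$ and $t_0<1$, so $t\mapsto t^{2p-3}$ is non-increasing and $t_0\geq a$ gives $t_0^{2p-3}\leq a^{2p-3}$; since $q/p=\frac{1}{p-1}$, the claim reduces (by isolating $a^{3-2p}$ and raising to the positive power $\frac{2-p}{3-2p}$) to the scalar inequality
\[
(p-1)^{(p-1)/(3-2p)}\leq\frac{2}{4-p}\qquad\bigl(p\in[\tfrac65,\tfrac32)\bigr),
\]
the case $p=\tfrac32$ being trivial.

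The main obstacle is precisely the verification of these two one-variable inequalities on $[\tfrac65,\tfrac32]$: the second one has ample room, but the first is essentially sharp at the left endpoint $p=\tfrac65$, so some care is needed there. I would handle them by taking logarithms and analysing the monotonicity in $p$ of the resulting functions — for instance, showing that the left-hand sides are decreasing and the right-hand sides increasing on the relevant interval, which reduces each inequality to its value at $p=\tfrac65$ — controlling the derivatives by elementary estimates such as $\ln(1+x)\leq x$ and convexity/concavity of the power and logarithm functions.
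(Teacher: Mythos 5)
Your reduction is correct and in fact takes a genuinely different, arguably cleaner, route than the paper's. The paper factors $\frac{t^{p-1}-t}{1+t^p}=f(t)g(t)$ with the ad hoc choice $\xi=\frac{p-1}{3}$, locates the peaks $t_1,t_2$ of the two unimodal factors to get $\min\{t_1,t_2\}\leq t_0\leq\max\{t_1,t_2\}$, and then still has to prove $t_2\leq t_1$ separately. Your direct analysis of the critical equation $\varphi(t_0)=0$, with $\varphi$ strictly decreasing, gives both bounds at once and exposes the pleasant fact that each bound is \emph{equivalent} to the single inequality $a\leq b$, i.e.
$$
\left(\frac{2p-2}{4-p}\right)^{p}\leq\left(\frac{p-1}{2p+1}\right)^{2-p},
$$
which is exactly the inequality the paper also ends up having to verify. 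Your treatment of the second assertion likewise reduces it to the same scalar inequality as the paper's (there written as $\left(\frac{4-p}{2}\right)^{3-2p}\leq (p-1)^{-(p-1)}$). I have checked the algebra in all of these reductions and it is correct.

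The gap is that you never actually prove the two one-variable inequalities on $[\frac{6}{5},\frac{3}{2}]$, and these are the technical heart of the lemma — they occupy roughly half of the paper's proof. Moreover, the strategy you sketch ("left-hand sides decreasing, right-hand sides increasing, reduce to the value at $p=\frac{6}{5}$") does not work for the first inequality: after taking logarithms, both sides, $p\log\frac{2p-2}{4-p}$ and $(2-p)\log\frac{p-1}{2p+1}$, are \emph{increasing} on $[\frac{6}{5},\frac{3}{2}]$ (numerically from about $-2.335$ to $-1.374$ and from about $-2.267$ to $-1.040$, respectively), so no endpoint reduction of that naive kind is available. The paper instead shows that $h(p)=p\log 2+(2p-2)\log(p-1)+(2-p)\log(2p+1)$ satisfies $h''>0$ with $h'(\frac{6}{5})<0<h'(\frac{3}{2})$, so that $\max h=h(\frac{6}{5})$, while $k(p)=p\log(4-p)$ is increasing, and then checks $h(\frac{6}{5})<k(\frac{6}{5})$. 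Some argument of this calibre (or a proof that the difference of the two sides is monotone) is required — especially since, as you yourself note, the inequality is nearly sharp at $p=\frac{6}{5}$ — and until it is supplied the proof is incomplete.
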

\begin{proof}
	We start showing that $\left(\frac{2p-2}{4-p}\right)^\frac{1}{2-p}\leq t_0\leq \left(\frac{p-1}{2p+1}\right)^\frac{1}{p}$. To do so, take $\xi=\dfrac{p-1}{3}$, define the functions
	$$
	f(t)=\dfrac{t^\xi}{1+t^p} \qquad \textnormal{and} \qquad g(t)=t^{p-1-\xi}-t^{1-\xi} \qquad (t\in[0,1]),
	$$ 	
	and observe that $\frac{t^{p-1}-t}{1+t^p}= f(t)g(t)$ for $t\in[0,1]$.
	It is easy to see that $f$ increases until $t_1=\left(\frac{\xi}{p-\xi}\right)^{\frac{1}{p}}=\left(\frac{p-1}{2p+1}\right)^\frac{1}{p}$ and then decreases since
	$$
	f'(t)= \dfrac{\left(\xi+(\xi-p)t^p\right)}{t^{1-\xi}\left(1+t^p\right)^2}\qquad (0<t<1). 
	$$
	Similarly, $g$ increases until $t_2=\left(\dfrac{p-1-\xi}{1-\xi}\right)^{\frac{1}{2-p}}=\left(\dfrac{2p-2}{4-p}\right)^\frac{1}{2-p}$ and then decreases as
	$$
	g'(t)=\dfrac{(p-1-\xi)-(1-\xi)t^{2-p}}{t^{2-p+\xi}} \qquad (0<t<1).
	$$
	Therefore, the function $\frac{t^{p-1}-t}{1+t^p}=f(t)g(t)$ increases in the interval $]0,\min\{t_1,t_2\}[$ and decreases in the interval $]\max\{t_1,t_2\},1[$, so we deduce that $\min\{t_1,t_2\}\leq t_0\leq \max\{t_1,t_2\}$. Let us show now that $t_2\leq t_1$. To do so, observe that
	
\begin{align*}
\left(\frac{2p-2}{4-p}\right)^\frac{1}{2-p} \leq \left(\frac{p-1}{2p+1}\right)^\frac{1}{p}&\Longleftrightarrow \frac{1}{2-p}\log\left(\frac{2p-2}{4-p}\right)\leq \frac{1}{p}\log\left(\frac{p-1}{2p+1}\right)\\
&\Longleftrightarrow p\left(\log(2)+\log(p-1)-\log\left(4-p\right)\right)\leq(2-p)\big(\log(p-1)-\log(2p+1)\big)\\
&\Longleftrightarrow p\log(2)+(2p-2)\log(p-1)+(2-p)\log(2p+1)\leq p\log\left(4-p\right)
\end{align*}	
	and define the functions $h,k:[\frac65,\frac32]\to \R$ by
	$$
	h(p)=p\log(2)+(2p-2)\log(p-1)+(2-p)\log(2p+1)\qquad \textnormal{and} \qquad k(p)=p\log\left(4-p\right).
	$$
	Since
	\begin{align*}
	k'(p)&=\log(4-p)-\frac{p}{4-p}\\
	k''(p)&=-\frac{1}{4-p}-\frac{4}{(4-p)^2}<0
	\end{align*}
	we get that $k'$ is decreasing which, together with $k'(\frac32)>0$, tells us that $k$ is increasing. Besides, we have that
	\begin{align*}
	h'(p)&=\log(2)+2\log(p-1)+2-\log(2p+1)+\frac{4-2p}{2p+1}\\
	h''(p)&=\frac{2}{p-1}-\frac{2}{2p+1}-\frac{10}{(2p+1)^2}\\
	&=\frac{4p^2+14}{(p-1)(2p+1)^2}>0
	\end{align*}
	and so $h'(p)$ is increasing. This, together with $h'(\frac65)<0$ and $h'(\frac32)>0$, tells us that 
	$$
	\max_{p\in[\frac65,\frac32]}h(p)= \max\left\{h\left(\frac65\right),h\left(\frac32\right)\right\}=h\left(\frac65\right)\,.
	$$
	So $\max_{p\in[\frac65,\frac32]}h(p)=h(\frac65)<k(\frac65)<\min_{p\in[\frac65,\frac32]}k(p)$.
	
	The inequality $t_0^{2p-3}\leq\frac{q}{p}=\frac1{p-1}$ is equivalent to $1\leq \frac{1}{p-1}t_0^{3-2p}$. As
	we already know that $\left(\frac{2p-2}{4-p}\right)^\frac{1}{2-p}\leq t_0 $, the required inequality  will follow if we prove that
		$$ 
		1\leq\dfrac{1}{p-1}\left(\dfrac{2p-2}{4-p}\right)^\frac{3-2p}{2-p}, 
		$$
		which is equivalent to show that
		\begin{equation*}
		 \left(\frac{4-p}{2}\right)^{3-2p}\leq \dfrac{1}{(p-1)^{p-1}}.
		\end{equation*}  
To see this, consider the functions $\phi,\psi:[\frac65,\frac32]\to\R$ given by
$$
\phi(p)=\left(\frac{4-p}{2}\right)^{3-2p} \qquad \textnormal{and} \qquad \psi(p)=\frac{1}{(p-1)^{p-1}}\, , 
$$  
and observe that $\phi$ is decreasing since
	$$
	\phi'(p)= \left(\frac{4-p}{2}\right)^{3-2p} \left(-2\log\left(\frac{4-p}{2}\right)-\frac{3-2p}{4-p}\right)< 0 
	$$
	for $\frac65\leq p< \frac{3}{2}$. Besides, we have that
	$$ 
	\psi'(p)=(p-1)^{(1-p)}\left(-\log(p-1)-1\right)
	$$
	so $\psi$ increases in $]\frac65,1+\frac1\e[$ and decreases in $]1+\frac1\e, \frac32[$. Therefore, we deduce that
	$$
	\min_{p\in[\frac65,\frac32]} \psi(p)=\min\left\{\psi \left(\frac65\right),\psi\left(\frac32\right)\right\}=\psi\left(\frac65\right)>\phi\left(\frac65\right)=\max_{p\in[\frac65,\frac32]} \phi(p)
	$$
	which finishes the proof.
\end{proof}

We are ready to present and prove the main result of the paper.

\begin{theorem}\label{Main-theorem}
Let $p\in \left[\frac65,6\right]$. Then, 
$$
n(\ell_p^2)=M_p=\max_{t\in[0,1]}\dfrac{|t^{p-1}-t|}{1+t^p}.
$$
\end{theorem}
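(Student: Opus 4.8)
The plan is to combine an upper bound for the operator norm on $\ell_p^2$ coming from the Riesz-Thorin estimate $\|T\|\leq\|T\|_1^{1/p}\|T\|_\infty^{1/q}$ with the lower bounds for the numerical radius furnished by Lemma~\ref{lemma:numerical-radius}, using Lemma~\ref{lemma:estimation-t0} to keep the maximizing point $t_0$ under control. The first move is to reduce the range of $p$: since $\ell_p^2$ is finite-dimensional, hence reflexive, and $(\ell_p^2)^*=\ell_q^2$ isometrically, one has $n(\ell_p^2)=n(\ell_q^2)$, and moreover $M_p=M_q$. As $n(\ell_p^2)=M_p$ is already known for $\frac32\leq p\leq 3$, and as $p\in[3,6]$ corresponds to $q=\tfrac{p}{p-1}\in[\frac65,\frac32]$, it suffices to prove the theorem for $p\in[\frac65,\frac32]$. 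For such $p$, the inequality $n(\ell_p^2)\leq M_p$ is immediate, since the rotation $T_0=\begin{pmatrix}0&1\\-1&0\end{pmatrix}$ is an onto isometry of $\ell_p^2$, so $\|T_0\|=1$, while $v(T_0)=M_p$ by Lemma~\ref{lemma:numerical-radius}.

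The whole difficulty lies in the reverse inequality $M_p\|T\|\leq v(T)$ for every $T\in\mathcal{L}(\ell_p^2)$; by homogeneity (and since $n(\ell_p^2)>0$, so that $v$ is a norm) it is enough to show that $v(T)=M_p$ implies $\|T\|\leq 1$. Write $T=\begin{pmatrix}a&b\\c&d\end{pmatrix}$. Using that $v(T)=v(\pm S^{-1}TS)$ for the onto isometries $S$ of $\ell_p^2$ — the coordinate swap and the diagonal sign changes, which replace $T$ by $\begin{pmatrix}d&c\\b&a\end{pmatrix}$ and by $\begin{pmatrix}a&-b\\-c&d\end{pmatrix}$ — I would normalize the entries, say to $a\geq|d|\geq 0$ and $b+c\geq 0$. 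Since $\|T\|_1=\max\{|a|+|c|,\,|b|+|d|\}$ and $\|T\|_\infty=\max\{|a|+|b|,\,|c|+|d|\}$, the Riesz-Thorin inequality reduces the task to proving
$$
\|T\|_1^{1/p}\,\|T\|_\infty^{1/q}\leq 1 .
$$

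To obtain this, I would feed suitable values of $t$ into Lemma~\ref{lemma:numerical-radius}. The value $t=0$ gives $|a|\leq M_p$ and $|d|\leq M_p$; the value $t=1$ gives $|a+d|+|b+c|\leq 2M_p$; and the value $t=t_0$, together with its swapped companion, gives
$$
|a+d\,t_0^p|+|b\,t_0+c\,t_0^{p-1}|\leq M_p(1+t_0^p),\qquad |d+a\,t_0^p|+|c\,t_0+b\,t_0^{p-1}|\leq M_p(1+t_0^p).
$$
The heart of the proof is then to combine these constraints — distinguishing cases according to which terms attain the maxima in $\|T\|_1$ and $\|T\|_\infty$, and according to whether $b$ and $c$ have equal or opposite signs — and to deduce the displayed product inequality. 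The bounds $\bigl(\tfrac{2p-2}{4-p}\bigr)^{1/(2-p)}\leq t_0\leq\bigl(\tfrac{p-1}{2p+1}\bigr)^{1/p}$ and $t_0^{2p-3}\leq q/p$ of Lemma~\ref{lemma:estimation-t0} are precisely what make these numerical comparisons valid on the whole interval $[\frac65,\frac32]$. I expect this last step to be the main obstacle: operators close to $T_0$ (and diagonal ones) are extremal, so essentially every estimate used is tight there and there is little slack; keeping track of the off-diagonal entries $b$ and $c$ — where $|b+c|$ may be small although $|b|$ and $|c|$ are not — will be the most delicate point, and is presumably where the estimates on $t_0$ enter decisively.
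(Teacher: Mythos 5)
Your reduction steps are all correct and coincide with the paper's: pass to $p\in[\frac65,\frac32]$ by duality, get the upper bound $n(\ell_p^2)\leq M_p$ from the rotation, and attack the lower bound by comparing the Riesz--Thorin estimate $\|T\|\leq\|T\|_1^{1/p}\|T\|_\infty^{1/q}$ with the numerical-radius formula of Lemma~\ref{lemma:numerical-radius} evaluated at $t_0$. But the proof stops exactly where the theorem actually lives. The sentence ``the heart of the proof is then to combine these constraints \dots\ and to deduce the displayed product inequality'' is not an argument; it is the statement of the problem. You yourself flag the combination as ``the main obstacle,'' and nothing in the proposal resolves it. In the paper this combination occupies three separate Claims, each with its own substitutions and monotonicity arguments, and it is only there that the restriction $p\geq\frac65$ and the bound $t_0^{2p-3}\leq q/p$ from Lemma~\ref{lemma:estimation-t0} are actually consumed.

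Two concrete points where your setup, as written, would run into trouble. First, your normalization via onto isometries only lets you flip the signs of $b$ and $c$ \emph{simultaneously}, so assuming $b+c\geq 0$ does not control the individual signs, and the dangerous case (which you correctly identify) of $b,c$ large with opposite signs and $|b+c|$ small is not reachable by isometries alone. The paper instead uses a non-isometric reduction: for $S=\begin{pmatrix}|a|&|b|\\-|c|&-|d|\end{pmatrix}$ one has $\|S\|=\|T\|$ and $v(S)\leq v(T)$, which pins down the worst sign pattern and removes the absolute values from the constraint at $t_0$; it also invokes \cite[Remark 3]{MarMer-LP} to dispose of the case $\|T\|_\infty\leq\|T\|_1$ outright. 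Second, after that reduction the product inequality still requires genuinely nontrivial work: one must split into the cases $\|T\|_\infty=a+b$ versus $\|T\|_\infty=c+d$ (the latter further split according to the sign of $c+a-d-ct_0^{2-p}$), push each extremal configuration to a boundary by ad hoc replacements such as $c\mapsto bt_0^{2-p}$, $a\mapsto dt_0^p$, or $b\mapsto c-(d-a)\frac{1+t_0^p}{t_0^{p-1}+t_0}$ (the value equalizing the two branches of the numerical radius), and then verify monotonicity of the resulting one- or two-variable quotients; the final inequality reduces to $t_0^{2p-3}\leq q/p$ together with a positivity estimate that holds only for $p\geq\frac65$. Until these steps (or some substitute for them) are supplied, the proposal is a correct plan but not a proof.
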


\begin{proof} 
Using that $n(\ell_p^2)=n(\ell_q^2)$ and that the result is already known for $p\in[\frac32,2]$ we only need to work for $1<p\leq \frac32$. We divide the proof into three claims, the first two are valid for all the values of $1<p\leq \frac32$. Observe that 
\begin{align*}
n(\ell_p^2)&=\inf\left\{\dfrac{v(T)}{\|T\|}\colon 0\neq T\in\mathcal{L}(\ell_p^2)\right\} \\
&=\min\left\{ \inf\left\{\dfrac{v(T)}{\|T\|}\colon 0\neq T\in\mathcal{L}(\ell_p^2), \ \|T\|_\infty\leq\|T\|_1 \right\}, \inf\left\{\dfrac{v(T)}{\|T\|}\colon 0\neq T\in\mathcal{L}(\ell_p^2), \ \|T\|_1\leq\|T\|_\infty \right\}\right\}.
\end{align*}  
By \cite[Remark 3]{MarMer-LP}, if $T\in\mathcal{L}(\ell_p^2)$ is such that $\|T\|_\infty\leq\|T\|_1$, then
$M_p=\displaystyle\max_{t\in[0,1]}\dfrac{t^{p-1}-t}{1+t^p}\leq \dfrac{v(T)}{\|T\|}.$ Therefore, it is enough to prove that
\begin{equation}\label{eq:1a-condicion-suficiente}
\inf\left\{\dfrac{v(T)}{\|T\|}\colon 0\neq T\in\mathcal{L}(\ell_p^2), \ \|T\|_1\leq\|T\|_\infty \right\}\geq M_p.
\end{equation}

In order to give a lower estimation for $v(T)$, we make some observations. First, we may suppose that $\|T\|_1=\max\{|a|+|c|,|b|+|d|\}=|a|+|c|$.
Indeed, the operator
$$
S=\begin{pmatrix}0 & 1 \\ 1 & 0 \end{pmatrix}\begin{pmatrix}a & b \\ c & d \end{pmatrix}\begin{pmatrix}0 & 1 \\ 1 & 0 \end{pmatrix}=\begin{pmatrix}d & c \\ b & a \end{pmatrix}
$$
satisfies that $v(S)=v(T)$ and $\|S\|=\|T\|$ since $\begin{pmatrix}0 & 1 \\ 1 & 0 \end{pmatrix}$ is an isometry and $S^{-1}=S$.

In addition, we may assume that $T=\begin{pmatrix}a & b \\ -c & -d \end{pmatrix}$ with $a,b,c,d\geq0$.
Indeed, if $T=\begin{pmatrix}a & b \\ c & d \end{pmatrix}$ with  $a,b,c,d\in\R$, we may consider the operator $S=\begin{pmatrix}|a| & |b| \\ -|c| & -|d| \end{pmatrix}$ which clearly satisfies that $\|S\|=\|T\|$ and $v(S)\leq v(T)$. 

So, from now on we consider operators of the form $T=\begin{pmatrix}a & b \\ -c & -d \end{pmatrix}$ with $a,b,c,d\geq0$ and satisfying $\|T\|_1=a+c\leq \|T\|_\infty$.
For this class of operators, using Lemma~\ref{lemma:numerical-radius}, we have that 
\begin{equation}\label{eq:v(T)-estimation}
\begin{aligned}
v(T)&=\max\left\{\max_{t\in[0,1]}\dfrac{|a-d\,t^p|+|b\,t-c\,t^{p-1}|}{1+t^p},\max_{t\in[0,1]}\dfrac{|d-a\,t^p|+|c\,t-b\,t^{p-1}|}{1+t^p} \right\} \\
& \geq \max\left\{\dfrac{|a-d\,t_0^p|+|b\,t_0-c\,t_0^{p-1}|}{1+t_0^p},\dfrac{|d-a\,t_0^p|+|c\,t_0-b\,t_0^{p-1}|}{1+t_0^p} \right\}
\end{aligned}
\end{equation}
where $t_0$ is taken as in Lemma~\ref{lemma:estimation-t0}. Let us write
$$
F(T)=\dfrac{|a-d\,t_0^p|+|b\,t_0-c\,t_0^{p-1}|}{1+t_0^p} \qquad \textnormal{and} \qquad G(T)=\dfrac{|d-a\,t_0^p|+|c\,t_0-b\,t_0^{p-1}|}{1+t_0^p}
$$
and recall that by Riesz-Thorin theorem we have that $\|T\|\leq\|T\|_1^{1/p}\|T\|_\infty^{1/q}$. Hence, using \eqref{eq:v(T)-estimation}, it is clear that
\begin{align*}
&\inf\left\{\dfrac{v(T)}{\|T\|}\colon 0\neq T\in\mathcal{L}(\ell_p^2), \ \|T\|_1\leq\|T\|_\infty \right\}
\geq \\ &\quad \inf\left\{\dfrac{\max\{F(T),G(T)\}}{\|T\|_1^{1/p}\|T\|_\infty^{1/q}}\colon 0\neq T\in \mathcal{L}(\ell_p^2), T=\begin{pmatrix}a & b \\ -c & -d \end{pmatrix}, \ a,b,c,d\geq0,\ \|T\|_1=a+c\leq\|T\|_\infty \right\}:=(\alpha).
\end{align*}
In view of \eqref{eq:1a-condicion-suficiente}, to prove the theorem it is enough to show that 
\begin{equation*}\label{2a-condicion-suficiente}
(\alpha)\geq \dfrac{t_0^{p-1}-t_0}{1+t_0^p}.
\end{equation*}
To do so, we distinguish three cases: 
\begin{itemize}
\item $\|T\|_1=a+c\leq a+b=\|T\|_\infty$.
\item $\|T\|_1=a+c\leq c+d=\|T\|_\infty$ and  $c+a-d\leq c\,t_0^{2-p}$.
\item $\|T\|_1=a+c\leq c+d=\|T\|_\infty$ and  $c\,t_0^{2-p} \leq c+a-d$.
\end{itemize}

\noindent\textsc{Claim 1.} \emph{Let $1<p\leq\frac{3}{2}$ and let $T=\begin{pmatrix}a & b \\ -c & -d \end{pmatrix}$ be a non-zero operator in $\mathcal{L}(\ell_p^2)$ with $a,b,c,d\geq0$ and $\|T\|_1=a+c\leq a+b=\|T\|_\infty$. Then
$$
(\alpha)\geq \dfrac{t_0^{p-1}-t_0}{1+t_0^p}.
$$}

Observe that it suffices to prove that $(\alpha)_F\geq \frac{t_0^{p-1}-t_0}{1+t_0^p}$ where
$$
(\alpha)_F:=\inf\left\{\dfrac{F(T)}{\|T\|_1^{1/p}\|T\|_\infty^{1/q}}\colon 0\neq T\in \mathcal{L}(\ell_p^2), T=\begin{pmatrix}a & b \\ -c & -d \end{pmatrix}, \ a,b,c,d\geq0,\ \|T\|_1=a+c\leq a+b=\|T\|_\infty \right\}.
$$
Note that the restriction $\|T\|_1=a+c\leq a+b=\|T\|_\infty$ is equivalent to impose $c\leq b$ and $b+d\leq a+c$, which clearly implies $d\leq a$ and so 
	$$
	F(T)=\dfrac{a-d\,t_0^p+|b\,t_0-c\,t_0^{p-1}|}{1+t_0^p}\,.
	$$
	
	In order to estimate $(\alpha)_F$ we may suppose that $b\,t_0^{2-p}\leq c$ (equivalently, $b\,t_0\leq c\,t_0^{p-1}$). Indeed, if otherwise $b\,t_0^{2-p}>c$, we consider the operator $S=\begin{pmatrix}a & b \\ -b\,t_0^{2-p} & -d \end{pmatrix}$ which satisfies the hypotheses of Claim~1 since 
	$$
	\|S\|_1=a+b\,t_0^{2-p}>a+c\geq b+d \qquad \textnormal{and} \qquad \|S\|_\infty=a+b>d+b\,t_0^{2-p}.
	$$
	Moreover, $\|S\|_1>\|T\|_1$, $\|S\|_\infty=\|T\|_\infty$, and $F(S)=\dfrac{a-d\,t_0^p}{1+t_0^p}<F(T)$, so
	$$
	\dfrac{F(S)}{\|S\|_1^{1/p}\|S\|_\infty^{1/q}}<\dfrac{F(T)}{\|T\|_1^{1/p}\|T\|_\infty^{1/q}}.
	$$
	Thus, we can write
	\begin{align*}
	(\alpha)_F&= \inf\left\{\dfrac{F(T)}{\|T\|_1^{1/p}\|T\|_\infty^{1/q}}\colon0\neq T\in \mathcal{L}(\ell_p^2),\ T=\begin{pmatrix}a & b \\ -c & -d \end{pmatrix}, \ a,b,c,d\geq0, \ c\leq b, \ b+d\leq a+c \right\} \\ &\geq\dfrac{1}{1+t_0^p}\inf_{(a,b,c,d)\in A_1} \dfrac{a-d\,t_0^p+c\,t_0^{p-1}-b\,t_0}{(a+c)^{1/p}(a+b)^{1/q}} 
	\end{align*}
	where
	$$A_1=\left\{(a,b,c,d)\in\R^4\backslash\{0\}\colon a,b,c,d\geq0, \ b\,t_0^{2-p}\leq c \leq b , \ b+d \leq a+c \right\}.$$
	From the restriction $d \leq a+c-b$, it follows that 
	$$
	a-d\,t_0^p+c\,t_0^{p-1}-b\,t_0\geq a(1-t_0^p)+b(t_0^p-t_0)+c(t_0^{p-1}-t^p_0)
	$$
	so
	$$
	\inf_{(a,b,c,d)\in A_1} \dfrac{a-d\,t_0^p+c\,t_0^{p-1}-b\,t_0}{(a+c)^{1/p}(a+b)^{1/q}} \geq \inf_{(a,b,c)\in A_2} \dfrac{a(1-t_0^p)+b(t_0^p-t_0)+c(t_0^{p-1}-t^p_0)}{(a+c)^{1/p}(a+b)^{1/q}},
	$$
	where
	$$
	A_2=\left\{(a,b,c)\in\R^3\backslash\{0\}\colon a,b,c\geq0, \ b\,t_0^{2-p}\leq c \leq b\leq a+c\right\}.
	$$
	We define the function $$f(a,b,c)=\dfrac{a(1-t_0^p)+b(t_0^p-t_0)+c(t_0^{p-1}-t^p_0)}{(a+c)^{1/p}(a+b)^{1/q}} \qquad \left((a,b,c)\in\R^3\right)
	$$
	and our goal is to show that
	$$
	\inf_{(a,b,c)\in A_2} f(a,b,c)\geq t_0^{p-1}-t_0.
	$$
	Observe that $f$ decreases in the variable $b$ since $t_0^p-t_0<0$, so using that $b\leq a+c$, it is clear that $f(a,b,c)\geq f(a,a+c,c)$ for every $(a,b,c)\in A_2$. 
	Therefore, we have to minimize the two-variable function given by
	$$
	g(a,c)=f(a,a+c,c)=\dfrac{a(1-t_0)+c(t_0^{p-1}-t_0)}{(a+c)^{1/p}(2a+c)^{1/q}} \qquad \left((a,c)\in\R^2\right)
	$$
	on the set $A_3=\left\{(a,b)\in\R^2\backslash\{0\}\colon a,c\geq0, \ a\,t_0^{2-p}\leq c(1-t_0^{2-p})\right\}$.
	
	To get the inequality $g(a,c)\geq t_0^{p-1}-t_0$ it suffices to show that $g$ is decreasing in $c$, since in such a case it follows that
	$g(a,c)\geq \lim_{c\to\infty}g(a,c)=t_0^{p-1}-t_0$ for every $(a,c)\in A_3$ as desired.
	So let us prove that $\dfrac{\partial g}{\partial c}(a,c)\leq 0$ for every $(a,c)\in A_3$:
	\begin{align*}
	\dfrac{\partial g}{\partial c}(a,c) & \, (a+c)^{1/p+1} (2a+c)^{1/q+1} \\
	& =(t_0^{p-1}-t_0)(a+c)(2a+c) -\left(a(1-t_0)+c(t_0^{p-1}-t_0)\right)\left(\frac{1}{p}(2a+c) + \frac{1}{q}(a+c)\right)
	\\
	&  = \left(2t_0^{p-1}-\frac{1}{q}t_0-1-\frac{1}{p}\right)a^2 + \left(\left(1+\frac{1}{q}\right)t_0^{p-1}-\frac{1}{q} t_0-1\right)ac.
	\end{align*}
	Observe that 
	$$ 
	2t_0^{p-1}-\frac{1}{q}t_0-1-\frac{1}{p}= \left(1+\frac{1}{q}\right)t_0^{p-1}-\frac{1}{q}t_0-1-\frac{1}{p}(1-t_0^{p-1})\leq \left(1+\frac{1}{q}\right)t_0^{p-1}-\frac{1}{q}t_0-1,
	$$
	so to finish the proof of the claim it is enough to show that $\textstyle \left(1+\frac{1}{q}\right)t_0^{p-1}-\frac{1}{q} t_0-1\leq0$. To do so, define the function
	$$
	u(t)=\left(1+\frac{1}{q}\right)t^{p-1}-\frac{1}{q} t-1 \qquad (t\in[0,1])
	$$
	 and note that $u(1)=0$. Thus, the inequality $u(t)\leq 0$ will hold for every $t\in [0,1]$ if we prove that $u$ is an increasing function. This is easy to check as
	$$ 
	u'(t)=\left(1+\frac{1}{q}\right)(p-1)t^{p-2}-\frac{1}{q}
	$$
	and
	$$ 
	u''(t)=\left(1+\frac{1}{q}\right)(p-1)(p-2)t^{p-3}\leq 0
	$$
	for every $t\in]0,1[$, hence $u'$ is decreasing. Since
	$u'(1)=\left(1+\frac{1}{q}\right)(p-1)-\frac{1}{q}=\frac{2}{q}(p-1)\geq0$, it follows that $u'(t)\geq0$ for every $t\in]0,1[$ and $u$ is an increasing function as desired. Therefore, Claim 1 is proved.

Now we consider the operators satisfying $\|T\|_1=a+c\leq c+d=\|T\|_\infty$. Observe that this restriction is equivalent to impose $a\leq d$ and $b+d\leq a+c$ (and also gives $b\leq c$), so from now on we have
$$
F(T)=\dfrac{|a-d\,t_0^p|+c\,t_0^{p-1}-b\,t_0}{1+t_0^p} \qquad \textnormal{and} \qquad G(T)=\dfrac{d-a\,t_0^p+|c\,t_0-b\,t_0^{p-1}|}{1+t_0^p}.
$$

\noindent\textsc{Claim 2.} \emph{Let $1<p\leq\frac{3}{2}$ and let $T=\begin{pmatrix}a & b \\ -c & -d \end{pmatrix}$ be a non-zero operator in $\mathcal{L}(\ell_p^2)$ with $a,b,c,d\geq0$, $\|T\|_1=a+c\leq c+d=\|T\|_\infty$, and $c+a-d\leq c\,t_0^{2-p}$. Then
$$
(\alpha)\geq \dfrac{t_0^{p-1}-t_0}{1+t_0^p}.
$$}

In this case we will use only $G(T)$ to estimate $(\alpha)$. Define 
$$
B_1=\left\{(a,b,c,d)\in\R^4\backslash \{0\} \colon a,b,c,d\geq0, \ a\leq d, \ b\leq c+a-d\leq c\,t_0^{2-p}\right\}
$$ 
and observe that  
	$$
	(\alpha)\geq \dfrac{1}{1+t_0^p}\inf_{(a,b,c,d)\in B_1} \dfrac{d-a\,t_0^p+c\,t_0-b\,t_0^{p-1}}{(a+c)^{1/p}(c+d)^{1/q}}\,.
	$$
	Using that $b\leq c+a-d$, we obtain
	$$
	\inf_{(a,b,c,d)\in B_1} \dfrac{d-a\,t_0^p+c\,t_0-b\,t_0^{p-1}}{(a+c)^{1/p}(c+d)^{1/q}}\geq \inf_{(a,c,d)\in B_2} \dfrac{-a(t_0^{p-1}+t_0^p)-c(t_0^{p-1}-t_0)+d(1+t_0^{p-1})}{(a+c)^{1/p}(c+d)^{1/q}},
	$$
	where $B_2=\left\{(a,c,d)\in\R^3\backslash \{0\} \colon a,c,d\geq0, \ a\leq d-c(1-t_0^{2-p})\right\}$. Therefore, defining
	$$
	f(a,c,d)=\dfrac{-a(t_0^{p-1}+t_0^p)-c(t_0^{p-1}-t_0)+d(1+t_0^{p-1})}{(a+c)^{1/p}(c+d)^{1/q}} \qquad \left((a,c,d)\in\R^3\right),
	$$
	our problem is to show that
	$$
	\inf_{(a,c,d)\in B_2} f(a,c,d)\geq t_0^{p-1}-t_0.
	$$
	It is clear that $f$ is decreasing in $a$, therefore $f(a,c,d)\geq f(d-c(1-t_0^{2-p}),c,d)$ for every $(a,c,d)\in B_2$, so we have to minimize
	$$ g(c,d)=f(d-c(1-t_0^{2-p}),c,d)= \dfrac{c(t_0^p-t_0^2)+d(1-t_0^p)}{(d+c\,t_0^{2-p})^{1/p}(c+d)^{1/q}} \qquad \left((c,d)\in \R^2\right)$$
	on the set $B_3=\left\{(c,d)\in \R^2\backslash\{0\}\colon c,d\geq0, \ c(1-t_0^{2-p})\leq d \right\}$.
	Observe that $g$ is increasing in $d$ since
	\begin{align*}
		\dfrac{\partial g}{\partial d}(c,d) & \, (d+c\,t_0^{2-p})^{1/p+1} (c+d)^{1/q+1} \\
		& = (1-t_0^p)(d+c\,t_0^{2-p})(c+d)-\left(c(t_0^p-t_0^2)+d(1-t_0^p)\right)\left(\frac{1}{p}(c+d)+\frac{1}{q}(d+c\,t_0^{2-p})\right) \\
		&= \frac{1}{q}(1-t_0^p)(c+d)d - \frac{1}{q}(d+c\,t_0^{2-p})\left(c(t_0^p-t_0^2)+d(1-t_0^p)\right) + \left(t_0^{2-p}-\frac{1}{q}t_0^2-\frac{1}{p}t_0^p\right)(c+d)c \\
		&\geq \frac{1}{q}(1-t_0^p)(c+d)d - \frac{1}{q}(c+d)\left(c(t_0^p-t_0^2)+d(1-t_0^p)\right) + \left(t_0^{2-p}-\frac{1}{q}t_0^2-\frac{1}{p}t_0^p\right)(c+d)c \\
		& =(t_0^{2-p}-t_0^p)(c+d)c\geq 0.
	\end{align*}
	Therefore, for every $(c,d)\in B_3$, we have that
	$$g(c,d)\geq g(c,c(1-t_0^{2-p}))=\dfrac{1-t_0^{2-p}}{(2-t_0^{2-p})^{1/q}}=\dfrac{t_0^{p-1}-t_0}{(2t_0^p-t_0^2)^{1/q}}\geq t_0^{p-1}-t_0,$$
	where we have used that 
	$$
	t_0^p\leq\frac{p-1}{2p+1} \leq p-1
	$$
	by Lemma~\ref{lemma:estimation-t0} and so, $2t_0^p-t_0^2\leq 2t_0^p\leq 2p-2\leq 1$.

We consider now the remaining case.

\noindent\textsc{Claim 3.} \emph{Let $\frac{6}{5}\leq p\leq\frac{3}{2}$ and let $T=\begin{pmatrix}a & b \\ -c & -d \end{pmatrix}$ be a non-zero operator in $\mathcal{L}(\ell_p^2)$ with $a,b,c,d\geq0$, $\|T\|_1=a+c\leq c+d=\|T\|_\infty$, and $c\,t_0^{2-p} \leq c+a-d$. Then
$$
(\alpha)\geq \dfrac{t_0^{p-1}-t_0}{1+t_0^p}.
$$}

	First, in order to estimate $(\alpha)$, observe that we may suppose that $a\geq d\,t_0^p$. Indeed, if otherwise $a<d\,t_0^p$, we consider the operator $S=\begin{pmatrix} d\,t_0^p & b \\ -c & -d \end{pmatrix}$ which satisfies the hypotheses of  Claim~3 since  $c\,t_0^{2-p} \leq c+a-d<c+d\,t_0^p-d$,
	$$\|S\|_1=d\,t_0^p+c > a+c\geq b+d, \qquad \textnormal{and} \qquad \|S\|_\infty =c+d\geq b+d>b+d\,t_0^p.$$
	Moreover, $\|S\|_1>\|T\|_1$, $\|S\|_\infty=\|T\|_\infty$,
	$$ 
	F(S)=\dfrac{c\,t_0^{p-1}-b\,t_0}{1+t_0^p}<F(T), \qquad \textnormal{and} \qquad G(S)=\dfrac{d-d\,t_0^{2p}+|b\,t_0^{p-1}-c\,t_0|}{1+t_0^p}<G(T),
	$$
	so 
	$$ 
	\dfrac{\max\{F(T),G(T)\}}{\|T\|_1^{1/p}\|T\|_\infty^{1/q}} \geq \dfrac{\max\{F(S),G(S)\}}{\|S\|_1^{1/p}\|S\|_\infty^{1/q}}. 
	$$
	Additionally, we may assume that $c\,t_0^{2-p}\leq b$. Indeed, if otherwise $c\,t_0^{2-p}>b$, we consider the operator $S=\begin{pmatrix} a & c\,t_0^{2-p} \\ -c & -d \end{pmatrix}$ which satisfies the hypotheses of Claim 3 since $c\,t_0^{2-p} \leq c+a-d$,
	$$
	\|S\|_1=a+c \geq c\,t_0^{2-p}+d,\qquad \textnormal{and} \qquad \|S\|_\infty =c+d > c\,t_0^{2-p}+a.
	$$
	Furthermore, $\|S\|_1=\|T\|_1$, $\|S\|_\infty=\|T\|_\infty$, and, using that $c\,t_0^{2-p}>b$, it is is clear that 
	$$ 
	F(S)=\dfrac{a-d\,t_0^p+c\,t_0^{p-1}-c\,t_0^{2-p}\,t_0}{1+t_0^p}<F(T) \qquad \textnormal{and} \qquad G(S)=\dfrac{d-a\,t_0^{p}}{1+t_0^p}<G(T),
	$$
	so 
	$$ 
	\dfrac{\max\{F(T),G(T)\}}{\|T\|_1^{1/p}\|T\|_\infty^{1/q}} \geq \dfrac{\max\{F(S),G(S)\}}{\|S\|_1^{1/p}\|S\|_\infty^{1/q}}. 
	$$
	Therefore, we assume from now on that $a,b,c,d\geq 0$, $d\,t_0^p\leq a\leq d$, and $c\,t_0^{2-p}\leq b\leq c+a-d$. Under such restrictions, we have
	$$
	F(T)=\dfrac{a-d\,t_0^p+c\,t_0^{p-1}-b\,t_0}{1+t_0^p} \qquad \textnormal{and} \qquad G(T)=\dfrac{d-a\,t_0^p+b\,t_0^{p-1}-c\,t_0}{1+t_0^p}
	$$
	and so our goal is to give a lower bound of 
	$$
	\inf_{(a,b,c,d)\in C_1}\dfrac{\max\{F(T),G(T)\}}{\|T\|_1^{1/p}\|T\|_\infty^{1/q}}
	$$
	where $C_1=\left\{(a,b,c,d)\in\R^4\backslash\{0\}\colon a,b,c,d\geq 0, \ d\,t_0^p\leq a\leq d, \ c\,t_0^{2-p}\leq b\leq c+a-d \right\}$. To do so, notice that
	\begin{equation}\label{eq:claim-3-ineq-F-G}
	F(T)\leq G(T)\Longleftrightarrow a-d\,t_0^p+c\,t_0^{p-1}-b\,t_0 \leq d-a\,t_0^p+b\,t_0^{p-1}-c\,t_0 \Longleftrightarrow b\geq c-(d-a)\dfrac{1+t_0^p}{t_0^{p-1}+t_0},
	\end{equation}
	and the equality holds if and only if $b= c-(d-a)\dfrac{1+t_0^p}{t_0^{p-1}+t_0}$. Our next step is to observe that we may compute the infimum using only operators satisfying $b= c-(d-a)\dfrac{1+t_0^p}{t_0^{p-1}+t_0}$, but first we need to show that
	$$
	ct_0^{2-p}\leq  c-(d-a)\dfrac{1+t_0^p}{t_0^{p-1}+t_0}\leq c+a-d.
	$$
	
	On the   one hand, we claim that ${1+t_0^p}\geq {t_0^{p-1}+t_0}$ and, consequently, $c-(d-a)\dfrac{1+t_0^p}{t_0^{p-1}+t_0}\leq c+a-d$. Consider the function $u\colon[0,1]\to \R$ given by
	$$ 
	u(t)=1+t^p-t^{p-1}-t \qquad (t\in[0,1])
	$$
	and observe that $u(1)=0$. Hence, the inequality ${1+t_0^p}\geq {t_0^{p-1}+t_0}$ will follow immediately if we prove that $u$ decreases in $t$. Indeed,
	$$ u'(t)=p\,t^{p-1}-(p-1)\,t^{p-2}-1$$
	and
	$$ u''(t)=p(p-1)\,t^{p-2}-(p-1)(p-2)\,t^{p-3}\geq 0 $$
	for every $t\in]0,1[$, thus $u'$ is increasing. Since
	$u'(1)=0$, it follows that $u'(t)\leq 0$ for every $t\in]0,1[$ as desired.
	
	On the other hand, we may and do assume that $c\,t_0^{2-p}\leq c-(d-a)\dfrac{1+t_0^p}{t_0^{p-1}+t_0}$. Indeed, if otherwise 
	$c-(d-a)\dfrac{1+t_0^p}{t_0^{p-1}+t_0}<c\,t_0^{2-p}$, we consider the operator $S=\begin{pmatrix} a & c\,t_0^{2-p} \\ -c & -d \end{pmatrix}$ which satisfies the conditions in $C_1$. Moreover, $\|S\|_1=\|T\|_1$, $\|S\|_\infty=\|T\|_\infty$, and $G(S)=\dfrac{d-a\,t_0^{p}}{1+t_0^p}<G(T)$. Now, it follows from \eqref{eq:claim-3-ineq-F-G} that $F(T)< G(T)$ and $F(S)< G(S)$ as $c-(d-a)\dfrac{1+t_0^p}{t_0^{p-1}+t_0}<c\,t_0^{2-p}\leq b$, therefore
	$$\max\{F(S),G(S)\}=G(S)<G(T)=\max\{F(T),G(T)\}.$$
	
	Consequently,
	\begin{align*}
	\inf_{(a,b,c,d)\in C_1} \dfrac{\max\{F(T),G(T)\}}{\|T\|_1^{1/p}\|T\|_\infty^{1/q}} &\geq \inf_{(a,b,c,d)\in C_2} \dfrac{\max\{F(T),G(T)\}}{\|T\|_1^{1/p}\|T\|_\infty^{1/q}}
	\end{align*}
	where 
	$$
	C_2=\left\{(a,b,c,d)\in \R^4\backslash \{0\} \colon a,b,c,d\geq 0, \ d\,t_0^p\leq a \leq d, \ c\,t_0^{2-p}\leq b\leq c+a-d, \ c\,t_0^{2-p}\leq c-(d-a)\textstyle{\frac{1+t_0^p}{t_0^{p-1}+t_0}} \right\}.
	$$
	Let us observe that the infimum 
	$$
	\inf_{(a,b,c,d)\in C_2} \dfrac{\max\{F(T),G(T)\}}{\|T\|_1^{1/p}\|T\|_\infty^{1/q}}
	$$ 
	can be computed using only operators satisfying $F(T)=G(T)$, that is, satisfying $b=c-(d-a)\dfrac{1+t_0^p}{t_0^{p-1}+t_0}$. Indeed, if $T$ is an operator satisfying the conditions in $C_2$, consider the operator 
	$$
	S=\begin{pmatrix} a & c-(d-a)\dfrac{1+t_0^p}{t_0^{p-1}+t_0} \\ -c & -d \end{pmatrix}
	$$ 
	which clearly satisfies the conditions in $C_2$, $F(S)=G(S)$, and $\|S\|_1^{1/p}\|S\|_\infty^{1/q}=\|T\|_1^{1/p}\|T\|_\infty^{1/q}$. Observe that if $b\leq c-(d-a)\dfrac{1+t_0^p}{t_0^{p-1}+t_0}$ then 
	$\max\{F(T), G(T)\}=F(T)\geq F(S)$. If otherwise $b\geq c-(d-a)\dfrac{1+t_0^p}{t_0^{p-1}+t_0}$ then 
		$\max\{F(T), G(T)\}=G(T)\geq G(S)$. So in either case we have $F(S)=G(S)\leq \max\{F(T), G(T)\}$.  
	
	For operators satisfying satisfying $b=c-(d-a)\dfrac{1+t_0^p}{t_0^{p-1}+t_0}$ we have that
	\begin{align*}
	F(T)=G(T)&=\dfrac{a\left(1-t_0\dfrac{1+t_0^p}{t_0^{p-1}+t_0}\right)+d\left(t_0\dfrac{1+t_0^p}{t_0^{p-1}+t_0}-t_0^p\right)+c(t_0^{p-1}-t_0)}{1+t_0^p}\\
	&=\dfrac{a\,\dfrac{t_0^{p-1}-t_0^{p+1}}{t_0^{p-1}+t_0}+d\,\dfrac{t_0-t_0^{2p-1}}{t_0^{p-1}+t_0}+c(t_0^{p-1}-t_0)}{1+t_0^p}\,.
	\end{align*}
	Hence, defining the function
	$$
	f(a,c,d)=\frac{a\,\dfrac{t_0^{p-1}-t_0^{p+1}}{t_0^{p-1}+t_0}+d\,\dfrac{t_0-t_0^{2p-1}}{t_0^{p-1}+t_0}+c(t_0^{p-1}-t_0)}{(a+c)^{1/p}(c+d)^{1/q}} \qquad \left((a,c,d)\in\R^3\right)
	$$
	we have that
	$$
	\inf_{(a,b,c,d)\in C_2} \dfrac{\max\{F(T),G(T)\}}{\|T\|_1^{1/p}\|T\|_\infty^{1/q}}\geq \dfrac{1}{1+t_0^p} \inf_{(a,c,d)\in C_3} f(a,c,d)
	$$
	where 
	$$
	C_3=\left\{(a,c,d)\in \R^3\backslash \{0\} \colon a,c,d\geq 0, \ d\,t_0^p\leq a \leq d, \ c\,t_0^{2-p}\leq c-(d-a)\textstyle{\frac{1+t_0^p}{t_0^{p-1}+t_0}} \right\}.
	$$
	Our aim is to prove that $f$ decreases in $c$ for $\frac65\leq p\leq\frac{3}{2}$. In such a case, it is clear that 
	$$
	f(a,c,d)\geq \lim_{c\to\infty}f(a,c,d)=t_0^{p-1}-t_0
	$$ 
	for every $(a,c,d)\in C_3$ and, as a consequence, $(\alpha)\geq \dfrac{t_0^{p-1}-t_0}{1+t_0^p}$ as desired.

	So, let us show that $\dfrac{\partial f}{\partial c}(a,c,d)\leq 0$ for every $(a,c,d)\in C_3$. Calling $K=a\,\dfrac{t_0^{p-1}-t_0^{p+1}}{t_0^{p-1}+t_0}+d\,\dfrac{t_0-t_0^{2p-1}}{t_0^{p-1}+t_0}$, we can write
	\begin{align*}
	\dfrac{\partial f}{\partial c}(a,c,d) & \, (a+c)^{1/p+1} (c+d)^{1/q+1} \\
	& = (t_0^{p-1}-t_0)(a+c)(c+d)-\left(c(t_0^{p-1}-t_0)+K\right)\left(\frac{1}{p}(c+d)+\frac{1}{q}(a+c)\right) \\
	& = \left((t_0^{p-1}-t_0)\left(\frac{a}{p}+\frac{d}{q}\right)-K\right) c + (t_0^{p-1}-t_0)ad-K\left(\frac{a}{q}+\frac{d}{p}\right)
	\\
	&\leq \left((t_0^{p-1}-t_0)\left(\frac{a}{p}+\frac{d}{q}\right)-K\right)\left(c+\frac{a}{q}+\frac{d}{p}\right),
	\end{align*}
	where we have used that
	$$
	\left(\frac{a}{p}+\frac{d}{q}\right)\left(\frac{a}{q}+\frac{d}{p}\right) = \frac{a^2+d^2}{pq}+\left(\frac{1}{p^2}+\frac{1}{q^2}\right)ad=\frac{(a-d)^2}{pq}+ad\geq ad. 
	$$
	Now, observe that
	\begin{align*}
	(t_0^{p-1}-t_0)\left(\frac{a}{p}+\frac{d}{q}\right)-K\leq 0 &\Longleftrightarrow (t_0^{2p-2}-t^2_0)\left(\frac{a}{p}+\frac{d}{q}\right)\leq a(t_0^{p-1}-t_0^{p+1})+d(t_0-t_0^{2p-1})\\
	&\Longleftrightarrow a\left(t_0^{p-1}-t_0^{p+1}-\frac{1}{p}t_0^{2p-2}+\frac{1}{p}t_0^2 \right) \geq d\left(\frac{1}{q}t_0^{2p-2}-\frac{1}{q}t_0^2-t_0+t_0^{2p-1}\right). 
	\end{align*}
	To prove the last inequality, using that $ a\geq d\,t_0^p$, it is enough to prove that 
	$$
	t_0^{2p-1}-t_0^{2p+1}-\frac{1}{p}t_0^{3p-2}+\frac{1}{p}t_0^{p+2}  \geq \frac{1}{q}t_0^{2p-2}-\frac{1}{q}t_0^2-t_0+t_0^{2p-1} 
	$$
	which, decomposing $t_0=\frac1pt_0+\frac1q t_0$ and $t_0^{2p+1}=\frac1pt_0^{2p+1}+\frac1q t_0^{2p+1}$, is equivalent to
	\begin{align*} \frac{1}{p}\left(t_0+t_0^{p+2}-t_0^{2p+1}-t_0^{3p-2}\right) &\geq \frac{1}{q} \left(t_0^{2p-2}+t_0^{2p+1}-t_0^2-t_0\right)
	\\ &= \frac{1}{q} t_0^{2p-3}\left(t_0+t_0^4-t_0^{5-2p}-t_0^{4-2p}\right).
	\end{align*}
	Note that $t_0+t_0^{p+2}-t_0^{2p+1}-t_0^{3p-2} \geq t_0+t_0^4-t_0^{5-2p}-t_0^{4-2p}$ for $\frac{6}{5}\leq p\leq\frac{3}{2}$ as 
	$$ t_0^{p+2}-t_0^4+t_0^{5-2p}-t_0^{2p+1}+t_0^{4-2p}-t_0^{3p-2}= t_0^{p+2}\left(1-t_0^{2-p}\right)+t_0^{5-2p}\left(1-t_0^{4p-4}\right)+t_0^{4-2p}\left(1-t_0^{5p-6}\right)\geq 0,$$
	therefore, it suffices to show that $t_0^{2p-3}\leq\frac{q}{p}$\,. 
	But this inequality holds for $p\in[\frac65,\frac32]$ thanks to Lemma~\ref{lemma:estimation-t0} and so Claim 3 is proved.
\end{proof}

\begin{remark}
The only use of the restriction $\frac65\leq p$ in the above proof was to guarantee that the inequality 
	$$ 
	t_0^{p+2}-t_0^4+t_0^{5-2p}-t_0^{2p+1}+t_0^{4-2p}-t_0^{3p-2}= t_0^{p+2}\left(1-t_0^{2-p}\right)+t_0^{5-2p}\left(1-t_0^{4p-4}\right)+t_0^{4-2p}\left(1-t_0^{5p-6}\right)\geq 0
	$$
holds. This inequality remains true for some values of $p$ smaller than $\frac65$ but close to it. The same happens with Lemma~\ref{lemma:estimation-t0}, so our procedure can give the equality $n(\ell_p^2)=M_p$ for a little wider range of values of $p$. However it seems that it does not work for $p$ close to $1$. Indeed, for $p=1.16$, numerical computations give $t_0\approx 0.073924$ and $M_p\approx 0.558064$. Besides, the operator $T=\begin{pmatrix}a & b \\ -c & -d \end{pmatrix}$ with
$$
a=0.0487295, \qquad b=13.639181, \qquad c=15, \qquad \textnormal{and} \qquad d=1
$$
satisfies
$$
\frac{\max\{F(T),G(T)\}}{\|T\|_1^{1/p}\|T\|_\infty^{1/q}}=\frac{1}{1+t_0^p}\frac{a-d\,t_0^p+c\,t_0^{p-1}-b\,t_0}{(a+c)^{1/p}(c+d)^{1/q}}\approx0.557895<M_p.
$$
\end{remark}

\end{document}